\documentclass[12pt, a4paper]{article}
\usepackage{amsfonts, amssymb}
\usepackage{graphicx}
\usepackage{latexsym,amsmath,color,array}

\usepackage{tikz}
\usetikzlibrary{automata, positioning,shapes,snakes}

\usepackage{natbib}
\usepackage{enumerate}
\usepackage{bbding}
\usepackage{amssymb}
\usepackage{amsmath}
\usepackage{graphicx}
\usepackage{amsmath}
\usepackage{bbm}
\usepackage{mathtools}
 \usepackage{color}
 \usepackage{array}
 \usepackage{subfigure}
 \usepackage{multirow}
 \usepackage{makecell}
 \usepackage{colortbl}

\topmargin -0.5cm \oddsidemargin 0cm \evensidemargin 0.0cm
\textwidth 16cm \textheight 21.7cm \footskip 1cm \topskip -3.5cm

\def\1{\mathbb{I}}

\usepackage{amsthm}
\newtheorem{theorem}{Theorem}
\newtheorem{corollary}{Corollary}

\newcounter{example}
\newenvironment{example}[1][]{\refstepcounter{example}\par\medskip \noindent \textbf{Example~\theexample. #1} \rmfamily}{\medskip}

\newcounter{remark}
\newenvironment{remark}[1][]{\refstepcounter{remark}\par\medskip \noindent \textbf{Remark~\theremark. #1} \rmfamily}{\medskip}

\newcounter{algo}
\newenvironment{algo}[1][]{\refstepcounter{algo}\par\medskip \noindent \textbf{Algorithm~\thealgo. #1} \rmfamily}{\medskip}

\newcounter{step}
\newenvironment{step}[1][]{\refstepcounter{step}\par\medskip \noindent \textbf{\textit{Step~\thestep. #1}} \rmfamily}{\medskip}
%
%
%
%
%
%

\setcounter{footnote}{0}

\begin{document}

\title{ On the Non-uniqueness of Representations of Coxian Phase-Type Distributions}
\author{Jean Rizk\footnote{University of Limerick; Jean.Rizk@ul.ie} \hspace{3cm}
Kevin Burke\footnote{University of Limerick; Kevin.Burke@ul.ie} \hspace{3cm}
Cathal Walsh\footnote{University of Limerick; Cathal.Walsh@ul.ie}  }
\date{\today}

\maketitle

\begin{abstract}
 Parameter estimation in Coxian phase-type models can be challenging due to their non-unique representation leading to a multi-modal likelihood. Since each representation corresponds to a different underlying data-generating mechanism, it is of interest to identify those supported by given data (i.e., find all likelihood modes). The standard approach is to simply refit using various initial values, but this has no guarantee of working. Thus, we develop new properties specific to this class of models, and employ these to determine all the equivalent model representations. The proposed approach only requires fitting the model once, and is guaranteed to find all representations.

\smallskip

{\bf Keywords.}Coxian phase-type distribution; Identifiability; Non-unique representation; Equivalent distributions; Parameter fitting; Maximum Likelihood..

\end{abstract}

\qquad

\newpage

\section{Introduction\label{intro}}
A Coxian phase-type (CPH) distribution of order $(n)$, which we denote by  $n$-CPH, describes duration until absorption in terms of a continuous time Markov process consisting of a sequence of ($n$) transient  latent phases and one absorbing state.  The process starts in the first phase and progresses sequentially through the other phases with a probability of exiting (to the absorbing state) from any phase. The Coxian Markov model is illustrated in Figure \ref{fig1}. 
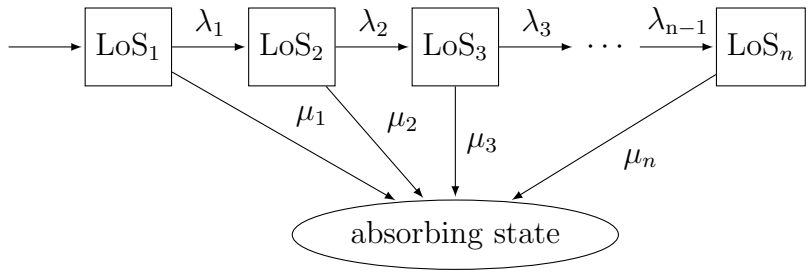
\begin{figure}[h]
\centering
\begin{tikzpicture}
        \tikzset{node style/.style={state, 
                                    fill=white!40!white,
                                    rectangle}}
    
        \node[node style]               (I)   {LoS$_1$};
        \node[node style, right=of I]   (II)  {LoS$_2$};
        \node[node style, right=of II]  (III) {LoS$_3$};
        \node[draw=none,  right=of III]   (k-n) {$\cdots$};
        \node[node style, right=of k-n]  (n)   {LoS$_n$};
        
        \node[draw=none, left=of I] (above) {};
        \node[node style,ellipse,fill=white!60!white,minimum height=0.7cm,minimum width=1cm,node distance=1.5cm, below=of III] (q) {absorbing state};
    \draw[>=latex,auto=left,every loop]
         (above)   edge node {}        (I) 
       (I)  edge node {$\mu_{1}$} (q)
      (II)  edge node {$\mu_{2}$}  (q)
       (III) edge node {$\mu_{3}$} (q)
       (n)   edge node {$\mu_n$}   (q)
  (I) edge node {$\lambda_{\mathrm{1}}$}(II)
  (II)  edge node {$\lambda_{\mathrm{2}}$}(III)
  (III) edge node {$\lambda_{\mathrm{3}}$} (k-n)
   (k-n)  edge node {$\lambda_{\mathrm{n-1}}$}  (n);
\end{tikzpicture}
 \caption{An illustration of the Coxian Markov model.}\label{fig1}
\end{figure}

The $\lambda$ parameters   describe the transition rates through the transient states. The $\mu$ parameters describe the transition rates from the transient states to the absorbing state. The total time spent in the system is broken down into the distinct phases, representing different stages of the entire process. For instance, in healthcare applications, the length of stay (LoS) in a phase  may represent the time spent by a patient in a particular stage of care or a disease state.

 Coxian distributions are a subclass of phase-type (PH) distributions.  The non-uniqueness of PH distributions \citep{OCinneide1989,telek2007}, as well as their overparametrisation, has encouraged several researchers to investigate the minimal PH representation problem, which is the determination of the minimal number of phases for a given PH distribution. For example, \cite{cumani1982} showed that every  acyclic PH distribution (APH) (a PH representation with a triangular generator matrix) can be transformed into an equivalent minimum-parameter form known as the canonical form, with a bidiagonal generator matrix. \cite{mocanu1999} proposed a transformation of a general PH representation to a monocyclic representation where the generator matrix remains bidiagonal on the matrix block level. A smaller representation leads to a shorter computational time in random-variate generation \citep{reinecke2010}, and the reduced number of parameters improves estimation performance.

 A Coxian distribution has a bidiagonal structure, the generator matrix is such that $a_{ii}<0$, $a_{i,i+1}>0$ and $a_{i,i+1}$ is not necessarily equal to $-a_{i,i}$. The representation where $a_{ii}= -a_{i,i+1}$ is called hypoexponential or generalised Erlang representation, where the process starts in the first phase and progresses sequentially through the other phases to exit from the last phase. The generalised Erlang Markov model is illustrated in Figure \ref{fig2}. 
 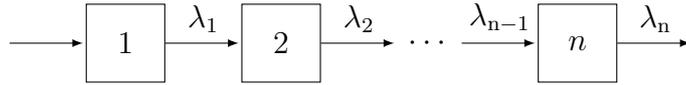
\begin{figure}[t]
\centering
\begin{tikzpicture}
        \tikzset{node style/.style={state, 
                                    fill=white!40!white,
                                    rectangle}}
    
        \node[node style]               (I)   {$1$};
        \node[node style, right=of I]   (II)  {$2$};
        \node[draw=none,  right=of II]   (k-n) {$\cdots$};
        \node[node style, right=of k-n]  (n)   {$n$};
        \node[draw=none, right=of n] (n+1)   {};
        
        \node[draw=none, left=of I] (above) {};
       
    \draw[>=latex,auto=left,every loop]
         (above)   edge node {}        (I) 
      
  (I) edge node {$\lambda_{\mathrm{1}}$}(II)
  (II) edge node {$\lambda_{\mathrm{2}}$} (k-n)
   (k-n)  edge node {$\lambda_{\mathrm{n-1}}$}  (n)
    (n)  edge node {$\lambda_{\mathrm{n}}$}  (n+1);
\end{tikzpicture}
\caption{An illustration of the generalised Erlang Markov model.}\label{fig2}
\end{figure}
The Coxian representation is already in its minimal form; however, although the dimension of the parameter vector cannot be reduced, non-uniqueness still arises. There may exist representations which have different non-zero diagonal and superdiagonal entries in the generator matrix, but which yield the same marginal distribution. Each set of parameters result in a different LoS in each phase, which means a different physical interpretation of the underlying mode.  Despite convergence of the optimisation algorithm, when multiple fits to data are performed from different starting values, they produce different sets of parameter estimates that correspond to the same maximum likelihood (Marshall and Zenga, 2012). This is a well known identifiability problem when fitting an $n$-CPH distributions and leads to optimisation algorithms running from one Coxian representation  into another Coxian representation of the same order $n$. For the rest of the paper, we refer to these representations as ''equivalent CPH distributions".

 While the the non-uniqueness of Coxian distributions poses some optimisation issues in itself (i.e., the multi-modal likelihood surface), it is necessary to select a given representation, and this choice cannot be made on the basis of the likelihood function alone. For example, in healthcare modelling, the choice could be based on medical experts' knowledge with a post hoc examination of the phases identified in the model-fitting process. However, another problem often occurs, which has not received attention in the literature:  the fitting algorithms might not find all the possible representations. In this case we become constrained to select from the representations that appear during the fitting procedure;  moreover,  the expectations of experts might not match  the outputs of the fitting process. Thus, it is necessary to identify all possible latent structures which yield the same marginal model. 

In the existing literature, a given representation is chosen (perhaps arbitrarily) without discussion of the logic underpinning  this final choice; a typical, and still arbitrary, choice is to simply select the representation which places the LoS's in ascending/descending order (which is equivalent to imposing constraints on model parameters). These strategies ignore the fact that other representations can also be feasible. It may be that some researchers simply reorder the phases according to LoS manually even if such a representation did not appear in the model fitting - such practice would yield erroneous conclusions as not all LoS permutations are necessarily feasible as we show in the sequel. Of course, one might diligently refit the model many times from a large variety of initial values hoping to uncover all equivalent  representations, but this does not guarantee that all will be found, and, moreover, this approach is computationally intensive. 

 The non-uniqueness of representations of two CPH distributions is in fact a special case of the non-uniqueness of two PH distributions of the same order that was addressed and proven by  \cite{OCinneide1989} and \cite{telek2007}.  \cite{OCinneide1989} addressed the non-uniqueness question by introducing two properties. (a) the phase-type ``simplicity" which concerns the possibility that a given phase-type distribution may have two different representations in terms of the same Markov chain, and (b) the phase-type ``majorization" which concerns the possibility that one Markov chain may provide representations for all the distributions represented by another Markov chain.  \cite{telek2007} addressed the same questions of non-uniqueness but presented alternative (somewhat more direct) theory than that of \cite{OCinneide1989}. Nonetheless, their work only enables one to verify if two pre-defined CPH distributions are equivalent or not. 
 
 \cite{reinecke2010} considered the identification of alternative representations in the setting of the generalised Erlang distribution (defined above) which, like the Coxian distribution, is another minimal form PH distribution. There, one must simply permute the diagonal elements of the generator matrix to produce a new representation; the superdiagonal elements are the negation of diagonal elements, and, therefore, these permute together. However, since the Coxian form is more general than that of the generalised Erlang, generating alternative representations is not so straightforward; this, perhaps, is the reason it has not been considered previously in the literature. We note, however, that \cite{horvath2012, horvath2016} consider alternative representations for the feedback Erlang distribution (a different generalisation of the generalised Erlang), which, like the generalised Erlang case, can be produced straightforwardly due to the fact that the generator matrix is in the Erlang form but on the matrix block level.

In general, finding an equivalent representation (with generator $Q_b$) to a PH distribution (with generator $Q_a$), can be partially achieved by solving the transformation equation $Q_aM=MQ_b$ \citep{cumani1982} for matrices $M$ and $Q_b$.  In this paper, we use this equation to  explore the structure of the transformation matrix $M$ when $Q_a$ and $Q_b$ become Coxian generators, and we prove that the equation is not needed to find an equivalent Coxian representation. An equivalent representation can simply be found by matching the moments of the two CPH distributions.

The purpose of our current work is identifying the feasibility of all the equivalent  representations of a given CPH distribution without the need to perform multiple refits.  As mentioned above, finding and checking the feasibility of these representations is important from the perspective of the physical interpretation of the model. To our knowledge, no effective mechanism for identifying all representations of a given CPH distribution has been developed. For this reason, we add to the existing literature and direct the focus towards CPH distributions, developing new properties which are specific to these distributions. These properties will be employed  as a mechanism for identifying all the possible representations of a given CPH distribution.

The paper is organised as follows. In Section 2 we give an overview of PH and CPH distributions and their applications before formally defining CPH distributions, their characteristics, and details on fitting procedures. Section 3 contains a detailed discussion on the identifiability problem, and our development of some new properties of CPH distributions which we make use of in Section 4 where we propose a method for identifying all representations of a given CPH distribution.  Finally, our conclusions are presented in Section 5.

\section{ Coxian phase-type distributions}
\subsection{Background}
PH distributions were first introduced by \cite{neuts1975}. They describe the time to absorption of a finite Markov chain in continuous time, where there is a single absorbing state and the stochastic process starts in a transient state. PH distributions became more attractive in the area of queue modelling and survival analysis. These areas generally exhibit data with skewed and heavy tailed distributions which make PH distributions particularly suitable. Furthermore,  these distributions have the ability to offer superior fit compared to the alternative distributions such as lognormal, Weibull, gamma, Pareto, or Burr distributions \citep{Faddy2009, Marshall2014}. In fact, PH distributions are dense in the class of all non-negative distributions, i.e., any distribution with a non-negative density function can be approximated to arbitrary precision by a PH distribution \citep{buchholz2014}.

 CPH distributions \citep{cox1955} are a subclass of PH distributions. In recent decades, most researchers have avoided using general PH distributions because they are overparametrised. They are highly redundant as the number of model parameters is greater than the degrees of freedom of the distribution function. The representation of an $n$-PH distribution ($n$ is the number of phases) has in general $n^2+n$ parameters, and its corresponding distribution function has $2n-1$ degrees of freedom \citep{cumani1982}. Using an $n$-CPH distribution reduces the number of parameters to $2n-1$, which makes it non-redundant, while typically still providing an excellent fit to the data. However, even with the reduced number of parameters required for the CPH  distribution, estimation can still be problematic due to the non-linear expression and non-unique representations of the distribution.
 
Based on the generalisation of Erlang's method of stages \citep{erlang1917}, CPH distributions are in fact a mixture of Hypoexponential distributions or generalised Erlang distributions \citep{augustin1982}. Note that a Hypoexponential distribution is a convolution of independent but non-identical exponential distributions. The exponential structure of the CPH distributions make them tractable and well-suited for numerical computations \citep{ishay2002thesis}.
 
  The use of CPH distributions has become increasingly more popular in the area of survival analysis particularly in healthcare applications. To mention a few, \cite{faddy1994} fitted CPH distributions of increasing order to the length of treatment for patients at risk of suicide.  \cite{vasilakis-adelle2005} modelled the LoS in hospital of stroke patients over the age of 65 in the UK. \cite{Marshall2014}  used the CPH distribution with covariates to model the LoS of geriatric patients in Emilia Romagna hospitals. \cite{zhu2018} also used CPH models with covariates to analyse the LoS of respiratory patients in emergency department. CPH distributions have also proven useful in the area of queueing theory,  where they have the ability to provide a generalisation of arrival and service processes beyond the widely used Poisson processes, by keeping a tractable and conceptually easy format for the state probabilities \citep{johnson1993_a, agarwal2007}.

\subsection{The model}
Consider a finite Markov process $\{ X(t); t \geq  0\}$  defined  in continuous time with discrete states $\{1, \dots, n, n+1 \}$. Here, states 1 to $n$  are latent transient states, while state $n+1$ represents the absorbing state. Let $p_i=\text{pr}(X(0) = i)$ be the probability of starting in the transient state $i$, for $i=1,\dots, n$. 
In a Coxian model the process starts in the first phase and hence the initial distribution is $p=(p_1,\dots,  p_n)=(1,0, \dots, 0) $, which is a row vector. Let the column vector $q=(\mu_1, \dots, \mu_n)^T\in \mathbbm{R}_{\geq 0}^n$ be the absorbing rate  vector, where $\mu_i$ is the rate of absorption to state $n+1$ from state $i$. The intensity matrix, $R$, of the process is 
\begin{align*}
{ R}=\begin{bmatrix}
        Q & \quad q \\
      0_{1\times n} &  \quad 0\\
      \end{bmatrix},
\end{align*}
where $0_{1\times n}$  is an $n$-dimensional row of zeros, and  $Q$,  an $n \times n$ matrix, is the phase-type generator of the n-CPH distribution, and is an upper bidiagonal given by 
\begin{align}
Q=\begin{bmatrix}
        -(\lambda_1 + \mu_1) & \lambda_1 & \quad 0  & \quad \quad&\cdots & \quad 0& \quad 0\\
      0 &  -(\lambda_2 + \mu_2) & \quad\lambda_2 &  &\cdots &\quad 0& \quad 0\\
      \vdots & \vdots & \quad \vdots& & &\quad \vdots&\quad\vdots\\
      0 &  0 & \quad 0 & & \cdots  & \quad -(\lambda_{n-1} + \mu_{n-1})  & \quad \lambda_{n-1}\\
       0 & 0 & \quad 0 &&\cdots  &\quad 0& \quad -\mu_n 
       \end{bmatrix}. \label{matrixQ}
\end{align}

 Since every row in $R$ sums to zero, it follows that
\begin{equation}
q = -Q\mathbbm{1},\label{vecq}
\end{equation}
where $\mathbbm{1}$ is an $n$-dimensional column vector of ones.
The pair $(p,Q)$ is sufficient to represent the CPH distribution. 

We denote by $T$ the random variable representing  the time until absorption. The  density function and the Laplace transform  of an $n$-CPH distribution are respectively given by $f(t)=p\exp(Qt)q$ and $ f^*(s)=p(sI - Q)^{-1}(-Q) \mathbbm{1}$. The latter is a rational function \citep{cox1955} with a denominator of degree $n$ and a numerator of degree $n-1$. The number of non-trivial coefficient of the Laplace transform,  $2n-1$, is the degrees of freedom of the distribution function  \citep{cumani1982}. \\  The $rth$ moment is 
\begin{align}
\label{moment}
 E\big[T^r\big]=r!p(-Q^{-1})^r\mathbbm{1}.
\end{align}
The matrix exponential, $\exp(Qt)=\sum_{r=0}^{\infty}(Qt)^r/r!$, is evaluated numerically. More details on computing matrix exponentials can be found in \cite{moler1978}.

The time spent in phase $k$ ($k=1, \dots, n$), denoted by $T_k$, is a random variable that is the minimum of two independent exponential random variables with parameters $\lambda_k$ and $\mu_k$. The random variable $T_k$ is in turn exponential with rate $\lambda_k+\mu_k$. The expected length of stay in  phase $k$  is
LoS$_k=E[T_k]=1/(\lambda_k +\mu_k)$, where, by definition, $\lambda_n \equiv 0$. Note that, the LoS$_k$ values are in fact the negative reciprocal of diag($Q)=(Q_{11}, \dots, Q_{nn})$, the diagonal elements of the generator $Q$.  

The probability  of exiting phase $k$, $\pi_k$, can be calculated by again using the density function of the exponentially distributed time spent in phase $k$ \citep{marshall2004}: 
\begin{align*}
\pi_k=\frac{\mu_k}{\lambda_k + \mu_k}.\prod_{j=1}^{k-1} \Big(\frac{\lambda_j}{\lambda_j+\mu_j} \Big),\quad  k=1,\dots,n
\end{align*}
where $\lambda_n \equiv 0$. \\

\subsection{Parameter estimation}

Least squares \citep{faddy1990,faddy1993}, moment matching \citep{johnson1993_b, schmickler1992} and maximum likelihood \citep{faddy1994,asmussen1996,faddy1999,perez2003,marshall2009,marshall2012exper} are the three main techniques that have been used to estimate the parameters when fitting a CPH distribution. The efficiency of the three  methods has been discussed and more details can be found in \cite{lang1996}, \cite{marshall2009,marshall2012exper}.

 The most common approach taken is the  maximum likelihood. A variety of optimisation techniques have been developed over the years aiming to minimise the CPH log-likelihood function,
\begin{align*}
\ell(\theta| t)=\sum_{i}\log \big[ f(t_i|\theta)\big]=\sum_{i}\log \big( pe^{Qt_i}{q\big)},
  \end{align*}

where $\theta=(\lambda_1, \dots, \lambda_{n-1}, \mu_1, \dots, \mu_n)$ is the vector of parameters to be estimated.\\

Due to the non-linearity of the multi-dimensional likelihood function, numerical optimsations have to be employed. \cite{asmussen1996} developed the EMPht program in the \texttt{C} programming language which is based on the expectation-maximization  algorithm \citep{dempster1977}. The EMPht algorithm is an iterative method that is based on approximating a non-negative continuous distributions by a phase-type distribution by minimising the information divergence (the Kullback-Leiber information) which can be considered as an infinitesimal analogue of maximising the log-likelihood function \citep{olsson1998empht}.
 
The most commonly used algorithm in this CPH context is the Nelder-Mead simplex method \citep{nelder1965} that is available in MATLAB Optimisation Toolbox \citep{MATLAB2015} under the function \texttt{fminsearch} or in \texttt{R} programming language \citep{R} under the function \texttt{optim}; for details see \cite{faddy1994}, \cite{faddy1999}, \cite{perez2003} and \cite{marshall2012exper}.\\
\cite{marshall2009} alternatively employed the Sequential Quadratic Programming (SQP) method that proved to exhibit a high rate of convergence. In this method, a Quadratic Programming subproblem is solved at each iteration along with an update on the estimate of the Hessian of the Lagrangian. It is also found in MATLAB under the package \texttt{fmincon}. An advantage of using this  package is that it has the ability to fit within defined parameter constraints. This is useful in the case we have prior information about the model parameters. The prior could be deterministic \citep{xie2012} or distributional \citep{ausin2008bayesian} and setting constraints would help reducing the computational time.

As these optimisation methods are numerical in nature, a common problem we often encounter is that they strongly depend on the initial parameter values \citep{marshall2009}. In addition, they do not always converge. Thus, the typical strategy is to initiate the optimisation algorithm from a variety of initial values. To obtain the number of latent phases one should fit sequentially an increasing number of phases \citep{faddy1998}, starting with one phase, until little improvement in the fit to the data can be obtained by adding a new phase. The number of phases is typically determined by minimising the Akaike information criterion (\textsc{AIC}). 

\section{The identifiability problem}
The matrix representation of the CPH distribution makes fitting the  distribution a difficult optimisation problem \citep{buchholz2014}. Despite convergence of the optimisation process, there are multiple sets of parameters that yield the same maximum likelihood, i.e., the likelihood has multiple global maxima. This is due to the non-unique representation which can cause the optimisation algorithms to jump from one representation into another equivalent representation. We illustrate the non-uniqueness problem in the CPH distribution in the following example.

\begin{example}
\label{exp1}
We simulated a dataset with 5,000 observations from a 3-CPH distribution with the following parameter values: $\lambda_1$=0.55, $\mu_1$=0.003, $\lambda_2$=0.05, $\mu_2$=0.15 and $\mu_3$=0.1. These parameters are taken from \cite{payne2011}. Ten different sets of initial values were used in the fitting procedure. The process converged to two different sets of parameter estimates with the same log-likelihood value. The estimated parameters along with their corresponding log-likelihood and  LoS$_k$ values are shown in Table~\ref{table1}.

\begin{table}[t]
\caption{Two different sets of parameters corresponding to one 3-CPH distribution.}
\centering
 \begin{tabular}{cccccccccc}
 \hline
 &&&&&&&&&\\[-0.4cm]
$\ell{(\theta)}$ & $\hat{\lambda_1}$ &$\hat{\mu_1}$ &$\hat{\lambda_2}$ &$\hat{\mu_2}$ & $\hat{\mu_3}$&$LoS_1$ & $LoS_2$ & $LoS_3$ \\ 
&&&&&&&&&\\[-0.4cm]
\hline
-15924.19 & 0.570 & 0.001 &  0.029 &0.143&0.091&1.75&	5.82&10.98\\
 & (0.030)  & (0.002) &  (0.017) &(0.005) &(0.011) & &	&\\
-15924.19& 0.170 & 0.001 &  0.096  &0.474 & 0.091& 5.82& 1.75&10.98 \\
& (0.028)  & (0.002) &  (0.068) &(0.057) &(0.015) & &	&\\
&&&&&&&&\\[-0.4cm]
\hline
\end{tabular}
\label{table1}
\end{table}

The two estimated distributions are equivalent in the sense that  their density functions coincide exactly when plotted (not shown). Despite the equivalence of the two distributions, their corresponding parameters are different which result in different LoS$_k$ values. In fact the LoS$_k$ values are the same but permuted. Each permutation, however, leads to a different model interpretation. 

Note that in all the simulations that we have carried out,  the first absorbing rate $\mu_1$ remained invariant, i.e., this parameter does not change with the alternative representation (see Table~\ref{table1} for example). In fact, it turns out that this is a common  feature between two equivalent Coxian distributions which we prove it later in this section. 

\end{example}

In the remainder of this section, we discuss and prove the equivalence between two CPH distributions. We also develop new properties associated to these distributions. The properties include the invariance of the first absorbing rate $\mu_1$ as well as the permutation of LoS$_k$ values for two equivalent CPH distributions.

\begin{theorem}
\label{theorem1}
 Let $( p_a,Q_a)$ and $(p_b,Q_b)$ be two non-redundant PH distributions of the same order with density functions $f_a(t)=p_ae^{Q_at}q_a$ and $ f_b(t)=p_be^{Q_bt}q_b$, respectively.\\ Then $f_a(t) \equiv f_b(t)\iff$ $\exists$ a non-singular matrix $M$ such that:
(a) $\> p_aM=p_b$, 
(b) $\>Q_b=M^{-1}Q_a  M$  and 
 (c) $\>Mq_b=q_a$ .
\end{theorem}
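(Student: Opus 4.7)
The sufficiency direction is quick: I would expand the matrix exponential as a power series and use $Q_b = M^{-1}Q_a M$ to obtain $e^{Q_b t} = M^{-1} e^{Q_a t} M$; substituting $p_b = p_a M$ and $q_b = M^{-1} q_a$ (which is (c) rearranged) into $f_b(t) = p_b e^{Q_b t} q_b$ then collapses the product to $p_a e^{Q_a t} q_a = f_a(t)$.

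For the necessity direction, the plan is to borrow the standard construction from linear realization theory. First, I would differentiate $f_a(t)\equiv f_b(t)$ at $t = 0$ (equivalently, use uniqueness of the Laplace transform $p(sI-Q)^{-1}q$) to obtain the infinite family of moment identities $p_a Q_a^k q_a = p_b Q_b^k q_b$ for every $k \ge 0$, and then package the first $2n-1$ of them into the Hankel identity
\[
O_a C_a \;=\; O_b C_b,
\]
where $C_\star = [\,q_\star,\; Q_\star q_\star,\; \ldots,\; Q_\star^{n-1} q_\star\,]$ is the controllability matrix and $O_\star$ is the observability matrix whose $i$-th row is $p_\star Q_\star^{i-1}$. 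My candidate would then be $M := C_a C_b^{-1}$, which is automatically non-singular as a product of non-singular matrices.

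The crux of the argument is showing that the non-redundancy hypothesis in fact makes $C_a, C_b, O_a, O_b$ all non-singular and forces the characteristic polynomials $\chi_{Q_a}$ and $\chi_{Q_b}$ to coincide. For invertibility I would argue by contradiction: if the columns of $C_a$ spanned only a proper subspace $V \subsetneq \mathbb{R}^n$, then $V$ would be $Q_a$-invariant and contain $q_a$, so restricting the realization to $V$ would produce a strictly smaller representation with the same moments and hence the same density — contradicting the non-redundancy of $(p_a, Q_a)$; the same argument on the observability side handles $O_a$. Equality of characteristic polynomials then follows because the two minimal realizations have identical Laplace transforms, which in lowest terms have a unique degree-$n$ denominator.

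Once those minimality consequences are in hand, verification of (a)--(c) is mechanical. Condition (c) is immediate: $M q_b = C_a C_b^{-1} q_b = C_a e_1 = q_a$. Condition (a) comes from the first row of the Hankel identity, $p_a C_a = p_b C_b$, yielding $p_a M = p_b$. For (b), I would use the companion-form identities $Q_\star C_\star = C_\star F$ with $F$ the common companion matrix of $\chi_{Q_a} = \chi_{Q_b}$, giving
\[
Q_a M \;=\; Q_a C_a C_b^{-1} \;=\; C_a F C_b^{-1} \;=\; C_a C_b^{-1} Q_b \;=\; M Q_b,
\]
which rearranges to (b). The main obstacle is thus concentrated entirely in the passage from ``non-redundant'' to the invertibility of $C_\star, O_\star$ and equality of the characteristic polynomials; after that, the proof is essentially a translation of the classical fact that any two minimal realizations of the same rational transfer function are similar.
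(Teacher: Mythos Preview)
Your argument is correct but follows a genuinely different route from the paper's. For the necessity direction the paper (adapting \cite{telek2007}) works through the Jordan decomposition: writing $(-Q_\star)^{-1}=D_\star^{-1}J_\star D_\star$ with the normalisation $D_\star\mathbbm{1}=\mathbbm{1}$, it uses the moment identities $p_a(-Q_a)^{-k}\mathbbm{1}=p_b(-Q_b)^{-k}\mathbbm{1}$ (moments against $\mathbbm{1}$, not against $q_\star$) to force $J_a=J_b$ and $p_aD_a^{-1}=p_bD_b^{-1}$, then sets $M=D_a^{-1}D_b$ and obtains (c) from (b) together with the built-in property $M\mathbbm{1}=\mathbbm{1}$. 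You instead run the classical Kalman minimal-realization argument: the Hankel identity $O_aC_a=O_bC_b$, the choice $M=C_aC_b^{-1}$, and the companion-shift relations $Q_\star C_\star=C_\star F$. Your route is more elementary in that it avoids Jordan form and the attendant ``identical eigenvalues in a single block'' subtlety, and it plugs directly into standard linear-systems machinery; the paper's route, by contrast, delivers $M\mathbbm{1}=\mathbbm{1}$ as an immediate structural byproduct, a fact it records separately and then exploits in the later CPH corollaries (your $M$ satisfies this too, but you would have to deduce it afterwards from (b), (c) and the invertibility of $Q_a$). One point worth flagging: your contradiction argument for the invertibility of $C_\star$ and $O_\star$ produces a strictly smaller \emph{matrix-exponential} representation, not necessarily a PH one; this is harmless here because the paper's notion of ``non-redundant'' is phrased in terms of the Laplace-transform degrees of freedom, which is a realization-theoretic condition rather than a PH-specific one.
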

\begin{remark} As mentioned previously, a distribution is said to be redundant when the number of model parameters is greater than the degrees of freedom of the distribution function. The latter is the number of non-trivial coefficient of the Laplace transform \citep{cumani1982}.
\end{remark}
\begin{remark}
This theorem is different to Theorem 1 of \cite{telek2007}. The difference is that we use the equivalence of the probability density functions  rather than the cumulative distribution functions. Doing so results in the appearance of the new property (c) which relates the absorbing rate vectors to each other, which we make use of when investigating the non-uniqueness of CPH representations.

\end{remark}

 \begin{proof} 
 ($\impliedby$) If $p_aM=p_b$, $\>Q_b=M^{-1}Q_a M$ and $\>M^{-1}q_a=q_b$ then, \\
$f_b(t)=p_be^{Q_bt}q_b=p_aMe^{M^{-1}Q_aMt}M^{-1}q_a=p_aMM^{-1}e^{Q_at}MM^{-1}q_a=p_ae^{Q_at}q_a=f_a(t)$.\\
($\implies$) For this implication, we follow along the lines of \cite{telek2007} to prove conditions (a) and (b).\\
If $f_a(t) \equiv f_b(t)$ then they have identical moments (Eq.(\ref{moment})),
\begin{equation}
p_a(-Q_a)^{-k}\mathbbm{1}=p_b(-Q_b)^{-k}\mathbbm{1}. \label{eqmoment1}
 \end{equation}
Note that If $( p,Q)$ is a non-redundant PH distribution then there is $(-Q)^{-1}=D^{-1}JD$ a Jordan decomposition of $(-Q)^{-1}$, normalised such that $D\mathbbm{1}=\mathbbm{1}$ \citep{telek2007}. From this result, Equation(\ref{eqmoment1}) becomes 
 \begin{align}
p_a(D_a^{-1}J_a^{-k}D_a)\mathbbm{1}&=p_b(D_b^{-1}J_b^{-k}D_b)\mathbbm{1}\nonumber\\
\Rightarrow \quad p_aD_a^{-1}J_a^{-k}\mathbbm{1}&=p_bD_b^{-1}J_b^{-k}\mathbbm{1}.\label{eqmoment2}
 \end{align}
The last equality follows from the fact that $D_a\mathbbm{1}= D_b\mathbbm{1}=\mathbbm{1}$. The non-redundancy  ensures that the Jordan decomposition of the generator matrix is such that all identical eigenvalues belong to the same Jordan block.   Since $(p_a,Q_a)$ and $( p_b,Q_b)$ are non-redundant and $J_a$ and $J_b$ are  Jordan matrices with ordered eigenvalues, then Equation(\ref{eqmoment2}) implies that 
\begin{equation}
J_a=J_b
\label{jordan}
\end{equation}
and $p_aD_a^{-1}=p_bD_b^{-1} \Rightarrow p_b=p_aD_a^{-1}D_b.$\\
 Let $M=D_a^{-1}D_b$, then,\\
$ (a) \> p_b=p_aD_a^{-1}D_b=p_aM$.\\
$ (b) \> M^{-1}Q_a M= D_b^{-1}D_aQ_aD_a^{-1}D_b=D_b^{-1}D_aD_a^{-1}J_a^{-1}D_aD_a^{-1}D_b=D_b^{-1}J_b^{-1}D_b=Q_b$.\\
$(c) \> Mq_b= -MQ_b\mathbbm{1} =- MM^{-1}Q_aM\mathbbm{1}=-Q_aM\mathbbm{1}=-Q_a\mathbbm{1}=q_a.$ \\
In the last step we used that 
\begin{equation}
\label{m1=1}
M\mathbbm{1}=\mathbbm{1} 
\end{equation}
 since   $M=D_a^{-1}D_b$,  $D_b\mathbbm{1}=\mathbbm{1}$ and  $D_a^{-1}\mathbbm{1}=\mathbbm{1}$. 
\end{proof} 

\begin{remark} A well known result from linear algebra is that two square matrices $C$ and $D$ are ``similar" if there exists a non singular matrix $M$ such that $D=M^{-1}C M$ or equivalently $MD=CM$.  This equivalence relation is satisfied in condition (b) of Theorem \ref{theorem1} above. Thus, we can say that if two non-redundant PH distributions are equivalent then their corresponding generator matrices are similar. The matrix $M$ is called the transformation matrix and must satisfy Equation (9), which means the rows of $M$ must sum to one.
\end{remark}
 
\begin{remark} The conditions presented in Theorem \ref{theorem1} only enable one to verify whether or not  two given CPH distributions are equivalent, by finding a matrix $M$ that transforms one representation into another. However, the aim of this work is to find all the possible representations of one given Coxian distribution, not to simply verify that two are equivalent. Thus, we now develop new properties specific to CPH distributions (in Corollaries 1 and 2, and Theorem 2) which form the basis of our mechanism for identifying all representations (given in Section 4).
\end{remark}
 
 \begin{corollary} 
 \label{corollary1}
If $( p_a,Q_a)$ and $( p_b,Q_b)$ are two equivalent $n$-CPH distributions, then the transformation matrix $M$ is lower triangular with first row equals $(1,0, \dots, 0)$. 
\end{corollary}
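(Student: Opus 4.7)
The plan rests on combining conditions (a) and (b) of Theorem~\ref{theorem1} with the distinguishing features of the Coxian setting, namely the deterministic initial vector $p_a = p_b = (1, 0, \ldots, 0)$ and the upper bidiagonal form \eqref{matrixQ} of $Q_a$ and $Q_b$.

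First I would read condition (a), $p_a M = p_b$, component-wise: with both initial vectors equal to $(1,0,\ldots,0)$, this is precisely $(M_{11}, M_{12}, \ldots, M_{1n}) = (1, 0, \ldots, 0)$, which establishes the statement about the first row of $M$ and simultaneously serves as the base case $M_{1j} = 0$ for $j > 1$ of the induction that follows. Next I would recast condition (b) as $Q_a M = M Q_b$ and expand the $(i, j)$ entry. Because $Q_a$ is upper bidiagonal, $(Q_a M)_{ij}$ involves only $M_{ij}$ and $M_{i+1,j}$; symmetrically, $(M Q_b)_{ij}$ involves only $M_{i,j-1}$ and $M_{ij}$. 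Writing the Coxian parameters with subscripts $a,b$, this yields the scalar recursion
\begin{equation}
\lambda_{a,i}\, M_{i+1,j} \;=\; \lambda_{b,j-1}\, M_{i,j-1} + \bigl[(\lambda_{a,i} + \mu_{a,i}) - (\lambda_{b,j} + \mu_{b,j})\bigr]\, M_{ij},
\label{planrec}
\end{equation}
with the usual boundary convention $\lambda_{b,0} \equiv 0$.

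Lower triangularity then drops out of a straightforward induction on $i$. Assume row $i$ satisfies $M_{ik} = 0$ for every $k > i$ and pick any $j > i + 1$. Both $j > i$ and $j - 1 > i$ hold, so $M_{ij}$ and $M_{i,j-1}$ both vanish by the inductive hypothesis; the recursion \eqref{planrec} collapses to $\lambda_{a,i}\,M_{i+1,j} = 0$, forcing $M_{i+1,j} = 0$. Row $n$ has no above-diagonal entries to consider, so the induction closes at $i = n-1$.

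The main obstacle, and the step I would justify most carefully, is the division by $\lambda_{a,i}$: the argument requires $\lambda_{a,i} > 0$ for every $i \leq n-1$. I would handle this by invoking the implicit non-degeneracy of the Coxian representation (a vanishing interphase rate would disconnect the chain and reduce the distribution to strictly smaller order), so that strict positivity is inherent to calling $(p_a, Q_a)$ and $(p_b, Q_b)$ bona fide $n$-CPH representations. A secondary, but minor, bookkeeping point is getting the boundary conventions at $j=1$ and $i=n$ right so that the recursion \eqref{planrec} is applied only where both sides are genuine equations of $Q_a M = M Q_b$.
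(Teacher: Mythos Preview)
Your proposal is correct and follows essentially the same approach as the paper: use condition~(a) to pin down the first row of $M$, expand $Q_aM=MQ_b$ entrywise using the bidiagonal structure to obtain the scalar recursion, and then induct on the row index to propagate the above-diagonal zeros downward. The paper writes the recursion in terms of the generic entries $a_{ij},b_{ij}$ rather than the Coxian parameters $\lambda,\mu$, and simply asserts that the superdiagonal entries are non-zero where you argue non-degeneracy more explicitly, but the logic is identical.
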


\begin{proof} 
Let $M$ be a matrix with elements denoted by $M=(m_{ij})_{i,j}$, and the generator matrices are given by
\begin{align}
\label{QaQb}
Q_a = \begin{bmatrix}
a_{11} & a_{12}& & 0\\
 & \ddots & \ddots & \\
&  & \ddots & a_{(n-1)n} \\
0 & &  & a_{nn} &
\end{bmatrix} \> \text{and} \> \> Q_b = \begin{bmatrix}
b_{11} & b_{12}& & 0\\
 & \ddots & \ddots & \\
&  & \ddots & b_{(n-1)n} \\
0 & &  & b_{nn} &
\end{bmatrix}.
\end{align} 
The diagonal and superdiagonal  elements of $Q_a$ and $Q_b$ are all non-zero. \\ Since the two equivalent distributions are Coxian then $p_a=p_b=(1,0,\dots, 0)$.  Using condition $(a)$ in Theorem \ref{theorem1}, we have 
\begin{align}
p_a M&=p_b \nonumber\\
(1,0, \dots,0)M&=(1,0,\dots, 0) \nonumber\\
\Rightarrow (m_{1j})_{j=1}^n&=(1,0,\dots, 0); \label{firstrow}
\end{align}
which means the first row of $M$ is $(1, 0, \dots, 0)$.\\
We now use condition $(b)$ in Theorem \ref{theorem1}, $MQ_b=Q_aM$. We have, \\
\begin{align*}
MQ_b=(B_{ij})_{i,j}&= 
\begin{cases}
m_{i(j-1)}b_{(j-1)j} + m_{ij}b_{jj} & \quad i=1,\dots,n;  j=2,\dots,n \\
      m_{ij}b_{jj} &  \quad i=1,\dots,n;  j=1\\
\end{cases} \\
  Q_aM=(A_{ij})_{i,j}&=
\begin{cases}
      a_{ii}m_{ij}+a_{i(i+1)}m_{(i+1)j} & \quad i=1,\dots,n-1; \> \>j=1,\dots,n\\
      a_{ii}m_{ij} & \quad i=n ;  j=1,\dots,n \\
\end{cases} 
\end{align*}
By equating the above we define a difference equation as follows: 
 
\begin{align}\label{diffequ1}
a_{i(i+1)}m_{(i+1)j}=m_{ij}b_{jj} - a_{ii}m_{ij} + m_{i(j-1)}b_{(j-1)j}, 
\end{align}
where 
\begin{align*}
m_{i0}= m_{(n+1)j}=0 \> \> \> \text{for} \> \> i,j=1, \dots, n.
\end{align*}
For $i=1$, we have from Eq.(\ref{firstrow}) that $(m_{1j})_{j=2}^n=(0,\dots, 0)$. By substituting in Eq.(\ref{diffequ1}) we obtain $(m_{2j})_{j=3}^n=(0,\dots, 0)$.\\
For $i=2$, since $(m_{2j})_{j=3}^n=(0,\dots, 0)$, by substituting in Eq.(\ref{diffequ1}), we obtain $(m_{3j})_{j=4}^n=(0,\dots, 0)$.\\
By using the principal of mathematical induction we can easily show that $(m_{ij})_{j=i+1}^n=(0,\dots, 0)$ for $i=3,\dots,n$. Therefore, all the elements above the main diagonal of the matrix $M$ are zero which makes it a lower triangular matrix. 
\end{proof}

\begin{example}
 Using the results in Example \ref{exp1}, we can find a $3\times 3$ matrix $M$ to show that the two estimated CPH distributions in Table 1  are equivalent.  In order to find $M$, we solve conditions (a), (b) and (c) in Theorem \ref{theorem1}. The number of unknowns is  $(n^2+n-2)/2$ (where $n=3$) since $M$ is lower triangular and the first row of $M$ is known. This generates a system of $(n^2+n-2)/2$ equations and we obtain
\begin{align*}
M=\begin{bmatrix}
        1 & 0 & 0  \\
      0.7 &  0.3 & 0 \\
       0 & 0 & 1
       \end{bmatrix}.
\end{align*}
It is clear that $M$ satisfies Eq.(\ref{m1=1}) where the rows sum to one.
 \end{example}
 
\begin{corollary} 
\label{corollary2}
  If $(p_a, Q_a)$ and $(p_b, Q_b)$ are two equivalent CPH distributions then the absorbing rates from the first phase of their corresponding Markov chains, are identical. In other words, the first elements of the column vectors $q_a$ and $q_b$ are equal. 
  \end{corollary}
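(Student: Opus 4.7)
The plan is to extract the first-coordinate consequence of condition (c) in Theorem \ref{theorem1} using the structural information already obtained in Corollary \ref{corollary1}. First I would recall that equivalence of the two CPH distributions gives, by Theorem \ref{theorem1}(c), the matrix identity $Mq_b = q_a$. Since both distributions are Coxian of the same order, Corollary \ref{corollary1} applies and $M$ is lower triangular with first row equal to $(1,0,\ldots,0)$.

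Reading off the first entry of $Mq_b = q_a$ then collapses the sum to a single term:
\[
(q_a)_1 \;=\; \sum_{j=1}^{n} m_{1j}\,(q_b)_j \;=\; 1\cdot (q_b)_1 \;=\; (q_b)_1.
\]
To interpret this in terms of the parameters $\mu_1$, I would use the Coxian identity $q = -Q\mathbbm{1}$ from Eq.~(\ref{vecq}) applied to each generator. The first row of a Coxian generator as in (\ref{matrixQ}) is $(-(\lambda_1+\mu_1),\,\lambda_1,\,0,\dots,0)$, so the first component of $-Q\mathbbm{1}$ equals $(\lambda_1+\mu_1)-\lambda_1 = \mu_1$. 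Applying this to both $Q_a$ and $Q_b$, the equality $(q_a)_1=(q_b)_1$ is precisely $\mu_1^{(a)} = \mu_1^{(b)}$, which is the stated invariance of the first absorbing rate.

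There is essentially no hard step here: the corollary is an almost immediate consequence of the two preceding results, and the proof is a single matrix-vector coordinate reading. The only subtlety worth flagging is that the argument relies crucially on the first row of $M$ being $(1,0,\ldots,0)$, which in turn required both representations to share the Coxian initial vector $(1,0,\ldots,0)$ via condition (a) of Theorem \ref{theorem1}. Thus the substantive work has already been invested in Corollary \ref{corollary1}, and relaxing the Coxian structure of either representation would cause the conclusion to fail.
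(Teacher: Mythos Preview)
Your proof is correct and follows essentially the same route as the paper: invoke condition (c) of Theorem~\ref{theorem1}, use that the first row of $M$ is $(1,0,\ldots,0)$ from Corollary~\ref{corollary1}, and read off the first coordinate of $Mq_b=q_a$. Your additional remark unpacking $(q)_1=\mu_1$ via Eq.~(\ref{vecq}) is a helpful clarification but not a departure from the paper's argument.
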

  
\begin{proof} Denote by  $\mu_{a1}$ and $\mu_{b1}$ the first elements of the absorbing rate vectors  $q_a$ and $q_b$ respectively. As $(p_a,Q_a)$ and  $(p_b,Q_b)$ are equivalent then  condition (c) of Theorem \ref{theorem1} is satisfied. We have
\begin{align*}
Mq_b=q_a
 \Rightarrow (1,0,\dots, 0)\times q_b=\mu_{a1}
 \Rightarrow \mu_{b1}=\mu_{a1}.
\end{align*} 
As we have already mentioned, this property is  seen in Table 1 where $\hat{\mu_1}$ remains invariant in the two estimated equivalent distributions. 
\end{proof}


 \begin{theorem}
 \label{theorem2}
{ \it If two $n$-CPH distributions $( p_a,Q_a)$  and $( p_b,Q_b)$ are equivalent with $Q_a \neq Q_b$, then $Q_a$ and $Q_b$ have the same but permuted diagonal entries. }
\end{theorem}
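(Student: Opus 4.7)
The plan is to observe that for Coxian generators the structure in (\ref{matrixQ}) makes both $Q_a$ and $Q_b$ upper triangular, so their diagonal entries are exactly their eigenvalues. Equivalence of two non-redundant PH distributions (Theorem \ref{theorem1}(b)) forces $Q_a$ and $Q_b$ to be similar matrices, and similar matrices have identical characteristic polynomials; hence their eigenvalue multisets coincide and the diagonals must agree up to permutation.

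First, I would invoke Theorem \ref{theorem1}(b): since an $n$-CPH distribution is non-redundant (its $2n-1$ parameters match the degrees of freedom of the Laplace transform), equivalence of $(p_a,Q_a)$ and $(p_b,Q_b)$ produces a non-singular $M$ with $Q_b = M^{-1}Q_a M$. Second, by the Coxian form (\ref{matrixQ}) both generators are upper bidiagonal, in particular upper triangular, so
\begin{align*}
\det(xI - Q_a) = \prod_{i=1}^{n}(x - a_{ii}), \qquad \det(xI - Q_b) = \prod_{i=1}^{n}(x - b_{ii}).
\end{align*}
Third, similar matrices have the same characteristic polynomial, so these two factorisations coincide in $\mathbb{C}[x]$. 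By unique factorisation, the multisets $\{a_{11},\dots,a_{nn}\}$ and $\{b_{11},\dots,b_{nn}\}$ are equal, which is exactly the claim that the diagonal of $Q_b$ is a permutation of the diagonal of $Q_a$.

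The argument is essentially immediate once the upper-triangular structure is noticed, so there is no substantive technical obstacle. The only point to handle carefully is the possibility of repeated diagonal entries: working with the characteristic polynomial (which records multiplicities) rather than with the set of eigenvalues is the cleanest way to avoid subtleties in this case. The hypothesis $Q_a \neq Q_b$ is not used in deriving the conclusion; it merely signals that we are in the non-trivial regime in which the permutation is not the identity. It is worth noting that this result, combined with Corollary \ref{corollary2} (which pins down the first absorbing rate), already restricts which permutations of the diagonal can arise, a fact that will drive the identification mechanism in Section 4.
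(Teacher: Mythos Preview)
Your argument for the first half is essentially the paper's: similarity of $Q_a$ and $Q_b$ (via Theorem~\ref{theorem1}(b)) together with their triangular structure forces the diagonal multisets to coincide. The paper phrases this through the equality of Jordan forms in~(\ref{jordan}), but the content is the same.

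However, you have missed the second half of what the paper is claiming. The paper reads ``same but permuted'' as asserting that the permutation is \emph{not} the identity, and it devotes most of the proof to this point. Your final paragraph dismisses the hypothesis $Q_a\neq Q_b$ as ``merely signalling'' that the permutation is non-trivial, but this is precisely the statement that needs to be proved: a priori two upper bidiagonal matrices could share the same diagonal (in the same order) while differing on the superdiagonal, and nothing in your characteristic-polynomial argument rules this out. The paper closes this gap by a contradiction argument: assuming $a_{ii}=b_{ii}$ for all $i$, the recursion~(\ref{diffequ1}) collapses to~(\ref{diffequ2}), which forces all subdiagonal entries of $M$ to vanish; combined with Corollary~\ref{corollary1} (which already made $M$ lower triangular with unit first row) and $M\mathbbm{1}=\mathbbm{1}$, one gets $M=I$ and hence $Q_a=Q_b$, contradicting the hypothesis.

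This second half is not cosmetic: the paper explicitly invokes it after Algorithm~1 to justify that the moment-matching system has a \emph{unique} solution for each permutation (``This follows from the proof of Theorem~\ref{theorem2} where we showed that if two generator matrices have same diagonals then they are equal''). Without it, the identification mechanism in Section~4 would lack its uniqueness guarantee. So you need to supply this step, and doing so requires the structure of $M$ established in Corollary~\ref{corollary1}, not just the spectral information.
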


\begin{proof}
As seen in the proof of Theorem \ref{theorem1}, if two PH distributions are equivalent then the generators $Q_a$ and $Q_b$  have equal Jordan matrices (Eq.(\ref{jordan})), which means they have same eigenvalues. 

 Since $Q_a$ and $Q_b$ are triangular matrices then their eigenvalues are on their diagonals. Therefore, $Q_a$ and $Q_b$ have same diagonal entries. We still need to prove that the diagonal entries  of two equivalent Coxian distributions cannot have the same position and that they are permuted. We prove this by contradiction.

Assume that the position of the diagonal elements are the same, i.e. $a_{ii}=b_{ii}$ for $i=1, \dots, n$. Since $(p_a,Q_a)$ is equivalent to $(p_b,Q_b)$ then  $MQ_b=Q_aM$. We consider again Equation (\ref{diffequ1}) which reduces to
\begin{align}\label{diffequ2}
a_{i(i+1)}m_{(i+1)j}&= m_{i(j-1)}b_{(j-1)j} 
\end{align}
where 
\begin{align*}
m_{i0}= m_{(n+1)j}=0 \> \> \> \text{for} \> \> i,j=1, \dots, n.
\end{align*}
For $j=1$, since $(m_{i0})_{i=1}^n=(0,\dots,0)$, it follows from  Eq.(\ref{diffequ2}) that $(m_{(i+1)1})_{i=1}^{n}=(0,\dots,0)$ or equivalently  $(m_{i1})_{i=2}^{n}=(0,\dots,0)$.\\
For $j=2$, since $(m_{i1})_{i=2}^{n}=(0,\dots,0)$, then from  Eq.(\ref{diffequ2})  $(m_{i2})_{i=3}^{n}=(0,\dots,0)$.\\
We can show by induction that the rest of the elements under the main diagonal are all zero.  This makes the matrix $M$ upper triangular, but, from Corollary \ref{corollary1}, we know that the matrix is lower triangular. Therefore, $M$ is a diagonal matrix. Since the rows of $M$ have to sum to one, then $M$ is an identity matrix. It then follows from the relation, $MQ_b=Q_aM$, that $Q_a=Q_b$. This contradicts with the given which is $Q_a \neq Q_b$.

Therefore, the assumption that the position of the diagonal elements is the same is false. Thus $Q_a$ and $Q_b$ have permuted diagonal entries.  
\end{proof}

\begin{example} 
\label{exp3}
We use again the two estimated equivalent distributions in Example \ref{exp1}. The generator matrices are
\begin{align*}
Q_a=\begin{bmatrix}
        -0.571 & 0.570 & 0  \\
      0 &  -0.172 & 0.029 \\
       0 & 0 & -0.091
       \end{bmatrix} \text{and}    \> \>     
       Q_b=\begin{bmatrix}
        -0.172 & 0.170 & 0  \\
      0 &  -0.571 & 0.096 \\
       0 & 0 & -0.091
       \end{bmatrix}.
\end{align*}
We can see that the two sets of eigenvalues  are the same but in a different position with different superdiagonal entries. 
\end{example}
 
 \section{Construction of all equivalent representations}
Based on Corollary ~\ref{corollary2} and Theorem ~\ref{theorem2},  we present an analytical algorithm that produces and checks the feasibility of all the possible representations of a pre-defined CPH distribution. This algorithm is completely mathematical and totally independent of the fitting process. The approach only requires one representation to generate all others, i.e., we need only fit the model once and we are guaranteed to produce all representations of the CPH model.

For a given $n$-CPH distribution  $(p_a,Q_a)$ with generator $Q_a$ as defined in (\ref{QaQb}), we have the vector of diagonal elements  diag$(Q_a)=(a_{11},a_{22} \dots, a_{nn})$. Let the  vector  $\mathcal{V}^{(r)}$, $r=1, 2, \dots, n!$, be a permutation of the components of diag$(Q_a)$, where $\mathcal{V}^{(1)}$ is the identity permutation, i.e.,  $\mathcal{V}^{(1)}=$ diag$(Q_a)$. We denote $\mathcal{V}_i^{(r)}$ the $i$th component of the vector $\mathcal{V}^{(r)}$.\\
 
\begin{algo} 
For an arbitrary permutation $\mathcal{V}^{(r)}$, $r=2, \dots, n!$, suppose there exists $(p_b,Q_b)$ that is  equivalent to  $( p_a,Q_a)$. We know that $ p_a= p_b=(1,0,\dots, 0)$ and from Theorem \ref{theorem2} we have that the diagonal entries of $Q_b$ are the same as the diagonal entries of $Q_a$ but permuted, which means  diag$(Q_b$)=$\mathcal{V}^{(r)}$.  The generator $Q_b$ could be written as
 
\begin{align*}
Q_b = \begin{bmatrix}
\mathcal{V}_1^{(r)} & b_{12} & & 0\\
 & \ddots & \ddots & \\
&  & \ddots & b_{(n-1)n} \\
0 & &  & \mathcal{V}_n^{(r)} 
\end{bmatrix},
\end{align*}

where the vector $\{b_{12}, \dots, b_{(n-1)n}\}=$ superdiag$(Q_b)$.\\
 To find the equivalent distribution $(p_b,Q_b)$, it is sufficient to find the elements of the vector $\{b_{12}, \dots, b_{(n-1)n}\}$.
  
\begin{step}  The first component, $b_{12}$, can be found using Corollary \ref{corollary2} and Equation (\ref{vecq}). We have
  \begin{align*}
 \mathcal{V}_1^{(r)} +b_{12}&= a_{11} +a_{12} \\
 \Rightarrow  b_{12}&=-\mathcal{V}_1^{(r)}+a_{11} + a_{12} \\
  \end{align*}
  
  \end{step}
\begin{step}  To find the remaining $n-2$ unknowns of the vector  $\{b_{12}, \dots, b_{(n-1)n}\}$, we use the fact that the two equivalent distributions have identical moments and we solve the following system of  $n-2$ non-linear equations
 \begin{align}
 p_aQ_a^{-1}\mathbbm{1}&=p_bQ_b^{-1}\mathbbm{1} \nonumber\\
 p_aQ_a^{-2}\mathbbm{1}&=p_bQ_b^{-2}\mathbbm{1}  \label{simulequat}\\
 &  \vdots \nonumber\\
 p_aQ_a^{-(n-2)}\mathbbm{1}&=p_bQ_b^{-(n-2)}\mathbbm{1}.\nonumber
 \end{align}
 \end{step}
 
\begin{step} Repeat the above steps with a different permutation $\mathcal{V}^{(r)}$, $r=2, \dots, n!$.\\
\end{step}
\end{algo}

The system has a unique solution. This follows from the proof of Theorem \ref{theorem2} where we showed that if two generator matrices have same diagonals then they are equal. Thus, it is impossible to find multiple matrices $Q_b$ that are similar to  $Q_a$ and have identical diagonals and different superdiagonals. \\
For each permutation we accept the solution that falls within the constraints $0< b_{i(i+1)} \leq -\mathcal{V}_i^{(r)}$, for $i=1, \dots, n-1$. A solution outside the constraints results in a non-Markovian  representation  \citep{horvath2016} with negative transition rates or negative  absorbing rates, which does not lie in the Coxian space that we are  searching. If no permutation obeys these constraints, then the CPH distribution $(p_a, Q_a)$ has a unique representation. We illustrate the algorithm with the following examples.

\begin{example}  Given the 3-CPH distribution with generator 
\begin{align*} 
 Q_a=\begin{bmatrix}
        -1.0018 & 1 & 0  \\
      0 &  -0.2138 & 0.211 \\
       0 & 0 & -0.0259
       \end{bmatrix}. 
\end{align*}
The algorithm found that all the six permutations of the diagonal elements are feasible. The solutions are shown in Table ~\ref{table2},  where the row $r=1$ corresponds to the identity permutation.

\begin{table}[t]
 \centering
 \caption{All the possible representations of a CPH distribution.}
 \begin{tabular}{ccccccccc} 
 \hline
 &&&&&&&&\\[-0.3cm]
$r$ & $\mathcal{V}_1^{(r)}$ &$\mathcal{V}_2^{(r)}$ &$\mathcal{V}_3^{(r)}$ &$b_{12}$ & $b_{23}$ & $LoS_1$ & $LoS_2$ & $LoS_3$ \\ 
\hline
 &&&&&&&&\\[-0.4cm]
1& -1.0018 & - 0.2138&   -0.0259 &  1	&0.211 & 1& 4.7& 38.6\\
2 &  -0.0259&   -1.0018 &  -0.2138	&0.0241&0.941&38.6&	1&4.7\\
3 &  -0.2138 & - 0.0259 &-1.0018	& 0.212 &0.0175 & 4.7&38.6&1\\
4&   -0.2138 &  -1.0018 & -0.0259	&0.212 & 0.993& 4.7& 1&38.6 \\
5&   -1.0018 &  -0.0259 & -0.2138	&1 &0.0227 &1&	38.6&1\\
6& -0.0259  & -0.2138 &-1.0018	&0.0241 &0.154 &38.6 &	4.7&1\\
&&&&&&&&\\[-0.4cm]
\hline
\end{tabular}
\label{table2}
\end{table}
\end{example}

\begin{example} For the matrices $Q_a$ and $Q_b$ defined in Example \ref{exp3}, we found that they are equivalent to each other with no other existing equivalent distributions.
\end{example}

 \begin{example}
  Given the 4-CPH distribution with generator
   \begin{align*}
 Q_a=\begin{bmatrix}
        -1 & 0.965 & 0  &0 \\
      0 &  -0.447 & 0.435 & 0 \\
       0 & 0 &-0.446 &0.120 \\ 
       0 & 0 &0 &-0.151
       \end{bmatrix}
\end{align*}
This distribution was found to have no equivalent distributions. It is a Coxian distribution with a unique representation.
\end{example}

\section{Discussion}
In this paper, we have provided deeper insight into the non-uniqueness in the representation of CPH distributions. Our new results have facilitated the development of a mechanism  that finds all the possible representations and verifies their feasibility. 

For large values of $n$, it is clear that the algorithm becomes computationally expensive as it will have to run $n!$ times (albeit these can be run in parallel). However, solving the system of equations involved is far less intensive than attempting to refit the model multiple times from different initial values (with many attempts failing to converge due to the complicated likelihood surface), and this is especially true when the sample size is large.  Moreover, unlike the ad-hoc procedure of fitting from different initial values, our proposed algorithm is guaranteed to find all representations and only requires one vector of parameters as its input, i.e., the estimation routine need only converge once, after which our algorithm operates independently of the data and estimation routine. Lastly, it is worth noting that applications of CPH models in the literature rarely require a very large number of states to adequately fit data (typically $n < 10$ is sufficient) so that our proposal is feasible in most practical purposes.

\newpage

\bibliographystyle{apa}
\bibliography{References_paper1}

\end{document}